\numberwithin{equation}{section}
\def\pmod #1{\ ({\rm{mod}}\ #1)}
\theoremstyle{plain}
\newtheorem{theorem}{Theorem}
\newtheorem{lemma}{Lemma}
\newtheorem{proposition}{Proposition}
\theoremstyle{definition}
\patchcmd{\@settitle}{\uppercasenonmath\@title}{}{}{}
\patchcmd{\@setauthors}{\MakeUppercase}{}{}{}
\patchcmd{\section}{\scshape}{}{}{}
\begin{document}

\title
[{On a conjecture of Ram\'{\i}rez Alfons\'{\i}n and Ska{\l}ba III}]
{On a conjecture of Ram\'{\i}rez Alfons\'{\i}n and Ska{\l}ba III}

\author
[Y. Ding\quad  {\it and}\quad T. Komatsu] {Yuchen Ding\quad  {\it and}\quad Takao Komatsu}

\address{(Yuchen Ding) School of Mathematical Science,  Yangzhou University, Yangzhou 225002, People's Republic of China}
\email{ycding@yzu.edu.cn}
\address{(Takao Komatsu) Department of Mathematical Sciences, School of Science,  Zhejiang Sci-Tech University, Hangzhou 310018 China, People's Republic of China}
\email{komatsu@zstu.edu.cn}

\keywords{Frobenius number, $\ell$-numerical semigroup, Primes in residue classes, Hardy--Littlewood method, Siegel--Walfisz theorem.} 
\subjclass[2010]{Primary 11N05; Secondary 11D07.}

\begin{abstract}
Let $1<c<d$ be two relatively prime integers. For a non-negative integer $\ell$, let $g_\ell(c,d)$ be the largest integer $n$ such that $n=c x+d y$ has at most $\ell$ non-negative solutions $(x,y)$. In this paper we prove that
$$
\pi_{\ell,c,d}\sim\frac{\pi\bigl(g_\ell(c,d)\bigr)}{2 \ell+2}\quad(\text{as}~ c\to\infty)\,,
$$
where $\pi_{\ell,c,d}$ is the number of primes $n$ having more than $\ell$ distinct non-negative solutions to $n=c x+d y$ with $n\le g_\ell(c,d)$,
and $\pi(x)$ denotes the number of all primes up to $x$.
The case where $\ell=0$ has been proved by Ding, Zhai and Zhao recently, which was conjectured formerly by Ram\'{\i}rez Alfons\'{\i}n and Ska{\l}ba.
\end{abstract}
\maketitle

\section{Introduction}\label{sec:1}

Let $c_1,\dots,c_k$ ($k\ge 2$) be a set of distinct integers with $c_i>1$ ($i=1,\dots,k$).
For a given non-negative integer $\ell$, let $S_\ell(c_1,\dots,c_k)$ (written as $S_\ell$ for shorthand), which is called the $\ell$-numerical semigroup, with $\gcd(c_1,\dots,c_k)=1$ be the set of all the elements $n$ whose number of solutions to $c_1 x_1+\cdots+c_k x_k=n$ is more than $\ell$. Then for the set of non-negative integer $\mathbb N_0$, $\mathbb N_0\backslash S_\ell$ is the set of all the elements of $n$ whose number of solutions to $c_1 x_1+\cdots+c_k x_k=n$ ($x_1,\dots,x_k\in\mathbb N_0$) is less than or equal to $\ell$. In \cite{Kom1,Kom2,KY}, the properties of the $\ell$-numerical semigroups and explicit forms of crucial numbers are discussed\footnote{In \cite{Kom1,Kom2,KY} and more references, the terminology of $p$-numerical semigroup is frequently used. However, in this paper, we mainly deal with prime numbers, so we do not use $p$ or $q$, but instead use $\ell$.}.

The set $\mathbb N_0\backslash S_\ell$ is finite if and only if $\gcd(c_1,\dots,c_k)=1$. Then, there exists the largest element $g_\ell(c_1,\dots,c_k)$, which is called the {\it $\ell$-Frobenius number}. When $\ell=0$, $g(c_1,\dots,c_k)=g_0(c_1,\dots,c_k)$ is the original Frobenius number, which is the central topic on the classically well-known linear Diophantine problem of Frobenius.
In general, to find an explicit closed form of $g_\ell(c_1,\dots,c_k)$ is very difficult for $k\ge 3$, but when $k=2$, the $\ell$-Frobenius number can be given explicitly:
for any non-negative integer $\ell$
\begin{equation}
g_\ell(c,d)=(\ell+1)c d-c-d
\label{eq:ell-frob}
\end{equation}
(see \cite{Kom1,Kom2,KY} for more general formulas and related concepts).

For integers $c,d$ with $1<c<d$ and $\gcd(c,d)=1$, let $\pi_{\ell,c,d}$ be the number of primes in
$$
\mathcal N_\ell(c,d):=\{n\in\mathbb N_0|n\le g_\ell(c,d),~n\in S_\ell(c,d)\}\,,
$$
and $\pi(x)$ denotes the number of all primes up to $x$.

Ram\'{\i}rez Alfons\'{\i}n and Ska{\l}ba \cite{RA-S} proved that for any $\varepsilon>0$, there is a constant $k_\varepsilon>0$ such that
$$
\pi_{0,c,d}\ge k_\varepsilon\frac{g_0(c,d)}{\bigl(\log g_0(c,d)\bigr)^{2+\epsilon}}\,,
$$
and conjectured the following:
\begin{equation}
\pi_{0,c,d}\sim\frac{\pi(g_0\bigl(c,d)\bigr)}{2}\quad(\text{as}~ c\to\infty)\,.
\label{ding}
\end{equation}
Ding \cite{Ding} made some progress on the above conjecture (\ref{ding}). More precisely, for all but at most
$$
O\bigl(N(\log N)^{1/2}(\log\log N)^{1/2+\varepsilon}\bigr)
$$
pairs $c$ and $d$, one has
$$
\pi_{0,c,d}=\frac{\pi\bigl(g_0(c,d)\bigr)}{2}+O\left(\frac{\pi\bigl(g_0(c,d)\bigr)}{\bigl(\log\log(c d)\bigr)^\varepsilon}\right)\,.
$$
Since
$$
\frac{\pi\bigl(g_0(c,d)\bigr)}{2}+O\left(\frac{\pi\bigl(g_0(c,d)\bigr)}{\bigl(\log\log(c d)\bigr)^\varepsilon}\right)\sim\frac{\pi\bigl(g_0(c,d)\bigr)}{2}\quad(\text{as}~ c\to\infty)
$$
and the total number of the pairs $(c,d)$ with $1<c<d$, $\gcd(c,d)=1$ and $c d\ll N$ is $\gg N\log N$, Ding's result provided an almost all version of (\ref{ding}).

Though it seemed to be out of reach \cite{RA-S}, very recently Ding, Zhai and Zhao \cite{DZZ} completely proved (\ref{ding}).

The main purpose of this paper is to show a more general result of (\ref{ding}) as follows:

\begin{theorem}
Let $\ell$ be a non-negative integer.
For integers $c$ and $d$ with $1<c<d$ and $\gcd(c,d)=1$, we have
$$
\pi_{\ell,c,d}\sim\frac{\pi\bigl(g_\ell(c,d)\bigr)}{2 \ell+2}\quad(\text{as}~ c\to\infty)\,.
$$
\label{th1}
\end{theorem}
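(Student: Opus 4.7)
\emph{Reduction.} The first step is the identity $N(n+cd)=N(n)+1$ for all $n\ge 0$, where $N(n)$ denotes the number of non-negative representations of $n$ as $cx+dy$. This follows at once from Popoviciu's formula (whose two fractional parts are invariant under adding $cd$), or by a direct bijection. Combined with the elementary fact that $N(m)\in\{0,1\}$ for $0\le m<cd$ (any representation then forces $x<d$ and $y<c$, and is unique by $\gcd(c,d)=1$), iteration shows that for a prime $p\le g_\ell(c,d)=(\ell+1)cd-c-d$, one has $p\in S_\ell(c,d)$ if and only if $p\in[\ell cd,\,\ell cd+cd-c-d]$ and $p-\ell cd\in S_0(c,d)$. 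Hence
\[
\pi_{\ell,c,d}=\#\{p\text{ prime}:\ell cd\le p\le\ell cd+cd-c-d,\ p-\ell cd\in S_0(c,d)\}.
\]

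\emph{Shifted DZZ estimate.} For $\ell=0$ this is exactly the quantity shown in \cite{DZZ} to be asymptotic to $\pi(cd-c-d)/2$. For $\ell\ge 1$, I would retrace the Hardy--Littlewood/Siegel--Walfisz argument of \cite{DZZ} in the translated window $[\ell cd,\,\ell cd+cd-c-d]$. Because representability depends only on $p\pmod{cd}$, the combinatorial set-up is unchanged; the exponential sums over primes in the shifted window differ from those in \cite{DZZ} only by an innocuous unimodular phase $e(-\ell cd\,\alpha)$. The minor-arc bounds therefore transport verbatim, and on the major arcs Siegel--Walfisz continues to deliver the needed prime count in residue classes of small moduli inside $[\ell cd,(\ell+1)cd]$, since $\ell$ is fixed and $\log((\ell+1)cd)\sim\log cd$ as $c\to\infty$. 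The conclusion is
\[
\pi_{\ell,c,d}\sim\frac{cd-c-d}{2\log((\ell+1)cd)}\sim\frac{cd}{2\log cd}.
\]

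\emph{Conclusion via PNT.} By the prime number theorem $\pi(g_\ell(c,d))=\pi((\ell+1)cd-c-d)\sim(\ell+1)cd/\log cd$, so dividing the two asymptotics yields
\[
\frac{\pi_{\ell,c,d}}{\pi(g_\ell(c,d))}\sim\frac{1}{2\ell+2},
\]
as claimed. The reduction step is elementary; the main obstacle is the shifted DZZ estimate, i.e., verifying that every inequality in the \cite{DZZ} proof --- the circle-method decomposition, the minor-arc cancellation, and the Siegel--Walfisz input on the major arcs --- survives translation of the prime variable by $\ell cd$ with matching constants of proportionality. This is expected to be largely mechanical because $\ell$ is a fixed constant, but the boundary contribution near $p=\ell cd$ and the uniformity of the various error terms through the shift merit explicit treatment.
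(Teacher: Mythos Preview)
Your proposal is correct and follows essentially the same route as the paper. Your reduction via $N(n+cd)=N(n)+1$ is the content of the paper's Lemmas 2.3--2.5; your ``shifted DZZ estimate'' is exactly the paper's Proposition~2.1, whose proof likewise isolates the unimodular factor $e(-\ell cd\,\alpha)$ and then reruns the DZZ circle-method argument (with the same two-regime split: Hardy--Littlewood for $c\ge(\log g)^{43}$, direct Siegel--Walfisz for $c\le(\log g)^{43}$); and your PNT conclusion matches the paper's final deduction.
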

\noindent
{\it Remark.}
When $\ell=0$, this is reduced to the main result in \cite[Theorem 1.1]{DZZ}.
The result itself is still never obvious because the speed of convergence is very slow.

\section{Preliminaries}
From now on, $c$ and $d$ will always denote two positive integers satisfying that $1<c<d$ and $\gcd(c,d)=1$. Below, the first lemma is quite trivial.

\begin{lemma}
Suppose that $c x+d y=cx'+dy'$ with $x,y,x',y'\in\mathbb N_0$, then
$$c|(y-y') \quad  \text{and} \quad d|(x-x').$$
\label{lem3}
\end{lemma}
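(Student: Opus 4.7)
The plan is immediate, as the author already signals by calling the statement ``quite trivial.'' First I would rewrite the hypothesis $cx+dy=cx'+dy'$ as the single equality $c(x-x')=d(y'-y)$ of two integers. Since the left-hand side is manifestly divisible by $c$, so is the right-hand side; but $\gcd(c,d)=1$, so Euclid's lemma forces $c\mid(y'-y)$, which is the same as $c\mid(y-y')$. Reading exactly the same identity from the other side, $d\mid c(x-x')$, and invoking $\gcd(c,d)=1$ a second time, I would conclude $d\mid(x-x')$.

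There is really no obstacle: the only inputs are Euclid's lemma and the standing coprimality hypothesis $\gcd(c,d)=1$ introduced at the top of Section~2. In particular the proof uses neither the ordering $c<d$ nor any non-negativity of $x,x',y,y'$, so the statement remains valid for arbitrary integer solutions. The one thing worth flagging for the reader, even though the lemma itself is one line, is that the conclusion is tight in both directions simultaneously: the divisibility on $y-y'$ and the divisibility on $x-x'$ are not independent, because once $c\mid(y-y')$ is known the identity $c(x-x')=d(y'-y)$ determines the quotient $(x-x')/d$ from $(y'-y)/c$ with a sign flip.

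I anticipate that this lemma will be used downstream to parametrise, for a fixed $n\in\mathcal N_\ell(c,d)$, all non-negative representations $n=cx+dy$: given one particular solution $(x_0,y_0)$, the lemma immediately shows that every other integer solution has the shape $(x_0+kd,\,y_0-kc)$ with $k\in\mathbb Z$, and hence that the number of \emph{non-negative} solutions is controlled by how many values of $k$ keep both coordinates in $\mathbb N_0$. That combinatorial count is exactly the input one needs to translate the $\ell$-numerical semigroup condition ``$n$ has more than $\ell$ non-negative representations'' into an arithmetic condition on $n\pmod{cd}$, which in turn is where the Siegel--Walfisz and Hardy--Littlewood inputs advertised in the keywords will eventually come in to prove Theorem~\ref{th1}.
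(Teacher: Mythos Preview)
Your proof is correct and is exactly the obvious argument the paper has in mind; indeed the paper gives no proof at all, merely labelling the lemma ``quite trivial.'' Your additional remarks about how the lemma feeds into parametrising all representations of $n$ are also accurate and match precisely how the paper uses it in Lemmas~\ref{lem4} and~\ref{lem5}.
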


\begin{lemma}
Suppose that the number of solutions to
$$
n=c x+d y\quad(x,y\in\mathbb N_0)
$$
is exactly $\ell$, then we have
$$
n=(\ell-1)c d+c x_0+d y_0
$$
for some $0\le x_0\le d-1$ and $0\le y_0\le c-1$.
\label{lem4}
\end{lemma}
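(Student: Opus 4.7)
The plan is to use Lemma \ref{lem3} to give an explicit parametrization of the full non-negative solution set to $n=cx+dy$, and then read off the claimed normal form by a count.

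First I would fix any solution, and then apply Lemma \ref{lem3} in the following way. Suppose the equation $n=cx+dy$ has at least one non-negative solution. Since $d\mid (x-x')$ for any two non-negative solutions by Lemma \ref{lem3}, there is a \emph{unique} such solution $(x_0,y_0)$ with $0\le x_0\le d-1$; here $y_0\ge 0$ is forced. Any other non-negative solution $(x',y')$ then satisfies $x'=x_0+kd$ and $y'=y_0-kc$ for some integer $k$ (by B\'ezout, or by another application of Lemma \ref{lem3} together with the identity $c(x'-x_0)=d(y_0-y')$). Non-negativity of $y'$ together with the minimality of $x_0$ translates into $0\le k\le y_0/c$.

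Hence the total number of non-negative solutions equals $\lfloor y_0/c\rfloor +1$. By hypothesis this count equals $\ell$, which therefore satisfies $\ell\ge 1$, and we get
\[
y_0=(\ell-1)c+y_0'\qquad\text{for a unique }y_0'\in\{0,1,\dots,c-1\}.
\]
Substituting into $n=cx_0+dy_0$ yields
\[
n=cx_0+d\bigl((\ell-1)c+y_0'\bigr)=(\ell-1)cd+cx_0+dy_0',
\]
with $0\le x_0\le d-1$ and $0\le y_0'\le c-1$, which is exactly the claimed representation (the lemma's $y_0$ is our $y_0'$).

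There is essentially no obstacle here: the only point requiring a line of care is that the lemma implicitly assumes $\ell\ge 1$, since if no representation exists (the $\ell=0$ case) then there is no pair $(x_0,y_0)$ to start the argument. Given the finite-arithmetic-progression structure of solutions guaranteed by Lemma \ref{lem3}, the rest is just the division algorithm applied to $y_0$ by $c$.
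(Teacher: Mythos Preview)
Your proof is correct and follows essentially the same approach as the paper: both use Lemma~\ref{lem3} to show the non-negative solutions form a finite arithmetic progression in the $x$-coordinate (with step $d$), take the solution with minimal $x$-coordinate in $[0,d-1]$, and then read off the claimed form from the count $\ell$. Your write-up is in fact a bit more explicit than the paper's (you spell out the division algorithm on $y_0$ and flag the implicit hypothesis $\ell\ge 1$), but the underlying argument is identical.
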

\begin{proof}
Suppose that $\ell$ solutions are given as
$$
n=c x_1+d y_1=c x_2+d y_2=\cdots=c x_\ell+d y_\ell\,.
$$
Without loss of generality, assume that $$x_1<x_2<\dots<x_\ell.$$ So, $y_1>y_2>\dots>y_\ell$. Now, by Lemma \ref{lem3}, we have
$$
x_2=x_1+d,\quad x_3=x_1+2 d,\quad \dots,\quad x_\ell=x_1+(\ell-1)d
$$
with $0\le x_1\le d-1$ and $0\le y_\ell\le c-1$. Thus,
$$
n=c\bigl(x_1+(\ell-1)d\bigr)+d y_\ell=(\ell-1)c d+c x_1+d y_\ell.
$$
This completes the proof of Lemma \ref{lem4}.
\end{proof}

Let $\mathcal P$ be the set of primes.
\begin{lemma}
Let $\pi_{\ell,c,d}$ be defined as the one in introduction, then we have
$$
\pi_{\ell,c,d}:=\sum_{\substack{\ell c d<n\le g_\ell(c,d)\\ n=\ell c d+c x_0+d y_0\in\mathcal P\\ 0\le x_0\le d;\,0\le y_0\le c}}1\,.
$$
\label{lem5}
\end{lemma}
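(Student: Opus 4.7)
The plan is to establish the identity by showing that primes $n \in \mathcal{N}_\ell(c,d)$ are in one-to-one correspondence with the triples $(n,x_0,y_0)$ indexing the displayed sum, so that $\sum 1$ counts exactly $\pi_{\ell,c,d}$.

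The decisive step is to prove that every $n \in S_\ell(c,d)$ satisfying $n \le g_\ell(c,d)$ has exactly $\ell+1$ non-negative representations $n = cx + dy$. I would argue exactly as in Lemma~\ref{lem4}: if $n$ has precisely $m$ solutions, then by ordering them and invoking Lemma~\ref{lem3} one may write $n = (m-1)cd + cx_0 + dy_0$ with $0 \le x_0 \le d-1$ and $0 \le y_0 \le c-1$, whence $n \ge (m-1)cd$. Combined with $n \le g_\ell(c,d) = (\ell+1)cd - c - d < (\ell+1)cd$ from \eqref{eq:ell-frob}, this forces $m \le \ell+1$. The defining property $m \ge \ell+1$ of membership in $S_\ell(c,d)$ then pins down $m = \ell+1$, so $n = \ell cd + cx_0 + dy_0$ with $0 \le x_0 \le d-1$ and $0 \le y_0 \le c-1$, and the pair $(x_0,y_0)$ is uniquely determined by Lemma~\ref{lem3}.

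It remains to verify that widening the ranges to $0 \le x_0 \le d$ and $0 \le y_0 \le c$ does not introduce extra triples. Indeed, any triple with $x_0 = d$ or $y_0 = c$ would yield $n \ge \ell cd + cd = (\ell+1)cd > g_\ell(c,d)$, violating the upper bound on $n$, and the strict inequality $\ell cd < n$ only discards the pair $(x_0,y_0)=(0,0)$, corresponding to $n = \ell cd$, which is either $0$ (if $\ell=0$) or divisible by $c$ (if $\ell \ge 1$), hence never prime. Putting these pieces together, each prime in $\mathcal{N}_\ell(c,d)$ is counted exactly once by the displayed sum, which is the claimed identity. I do not expect any serious obstacle; the result is a structural consequence of Lemmas~\ref{lem3} and \ref{lem4} together with the explicit $\ell$-Frobenius formula \eqref{eq:ell-frob}, the only mildly delicate point being the simultaneous enforcement of $m \le \ell+1$ (forced by the upper bound $n \le g_\ell(c,d)$) and $m \ge \ell+1$ (the definition of $S_\ell$) to conclude that every relevant $n$ has exactly $\ell+1$ solutions.
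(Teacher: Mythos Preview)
Your proposal is correct and follows essentially the same approach as the paper: both arguments use Lemma~\ref{lem4} together with the bound $n\le g_\ell(c,d)<(\ell+1)cd$ to force exactly $\ell+1$ representations, then invoke Lemma~\ref{lem3} for uniqueness of $(x_0,y_0)$, check that enlarging the ranges to $x_0\le d$, $y_0\le c$ is harmless, and observe that $\ell cd$ is not prime. Your write-up is in fact slightly more explicit than the paper's about the uniqueness step and the non-primality of $\ell cd$.
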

\begin{proof}
If $n$ has more than $\ell+1$ solutions with the form $c x+d y$ ($x,y\in\mathbb N_0$), then by Lemma \ref{lem4}, we have
\begin{equation*}
n=(\ell+k) c d+c x_0+d y_0\qquad(0\le x_0\le d-1;\,0\le y_0\le c-1)\,.
\end{equation*}
for some $k\in \mathbb{Z}^+$. For these $n$, we will have
$$
n\ge (\ell+1)cd>g_\ell(c,d).
$$
So, if $n\le g_{\ell}(c,d)$ with more than $\ell$ solutions, then $n$ has exactly $\ell+1$ solutions (expressions). Employing again Lemma \ref{lem4}, we obtain that
\begin{equation*}
n=\ell c d+c x_0+d y_0\qquad(0\le x_0\le d-1;\,0\le y_0\le c-1)\,,
\end{equation*}
provided that $n\le g_\ell(c,d)$.

Furthermore, if $n=\ell c d+c x_0+d y_0\le g_\ell(c,d)$ with
$$0\le x_0\le d \quad \text{and} \quad 0\le y_0\le c,$$
then we clearly have $x_0\le d-1$ and $y_0\le c-1$.

Now, the lemma follows from the trivial fact that $\ell cd\not\in \mathcal P$.
\end{proof}

As usual, the von Mangoldt function $\Lambda(n)$ is defined as
$$
\Lambda(n)=\begin{cases}
\log p&\text{if $n=p^\alpha$ ($\alpha>0$)};\\
0&\text{otherwise}.
\end{cases}
$$

In order to prove Theorem \ref{th1}, we shall firstly provide the following weighted version whose proof will be put on the next section.

\begin{proposition} Let $\ell$ be a non-negative integer.
For integers $c$ and $d$ with $1<c<d$ and $\gcd(c,d)=1$, we have
$$
\psi_{\ell,c,d}\sim\frac{g_0(c,d)}{2}\quad(\text{as}~ c\to\infty)\,,
$$
where
$$
\psi_{\ell,c,d}=\sum_{\substack{\ell c d<n\le g_\ell(c,d)\\ n=\ell c d+c x_0+d y_0\\ 0\le x_0\le d;\,0\le y_0\le c}}\Lambda(n)\,.
$$
\label{prp1}
\end{proposition}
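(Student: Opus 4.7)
The plan is to adapt the method of \cite{DZZ} (which handles the $\ell=0$ case) to accommodate the shift by $\ell c d$ in the argument of $\Lambda$. First, I would parameterize $\psi_{\ell,c,d}$ by fixing $y_0\in\{0,1,\ldots,c-1\}$ and letting $x_0$ range over its admissible interval, writing
$$
\psi_{\ell,c,d}=\sum_{y_0=0}^{c-1}\sum_{x_0=0}^{X(y_0)}\Lambda\bigl(\ell c d+d y_0+c x_0\bigr),\qquad X(y_0)=\Bigl\lfloor\tfrac{g_0(c,d)-d y_0}{c}\Bigr\rfloor.
$$
For each fixed $y_0$ the inner sum is a sum of $\Lambda$ over the arithmetic progression $n\equiv d y_0\pmod c$ in an interval of length $c X(y_0)\asymp g_0(c,d)-d y_0$. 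The contribution from $y_0$ with $\gcd(y_0,c)>1$ comes only from prime powers of prime divisors of $c$ and totals $O(\omega(c)\log(c d))$, which is negligible.

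Next, I would apply the prime number theorem in arithmetic progressions to the inner sum for each $y_0$ with $\gcd(y_0,c)=1$. When $c$ lies in the Siegel--Walfisz range, i.e.\ $c\le(\log c d)^A$ for any fixed $A>0$, the inner sum equals
$$
\frac{c X(y_0)}{\varphi(c)}+O\!\Bigl(\frac{c d}{(\log c d)^B}\Bigr)
$$
with $B$ arbitrarily large, and summing over the $\varphi(c)$ admissible $y_0$ leaves an error of $O(c d\,\varphi(c)/(\log c d)^B)=o(c d)$. When $c$ is beyond this range, I would follow \cite{DZZ} and invoke the Hardy--Littlewood circle method: represent the triangle indicator $\mathbf{1}[c x_0+d y_0\le g_0(c,d)]$ as an integral over additive characters, reducing the whole sum to an integral against the prime exponential sum $S(\alpha)=\sum_n\Lambda(n)e(n\alpha)$ over $n$ in an interval of length $\asymp c d$, and then dissect $[0,1]$ into major and minor arcs. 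Standard major-arc estimates reproduce the main term, while Vinogradov--Vaughan bilinear bounds on the minor arcs absorb the remainder.

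Summing the main terms, using $X(y_0)=(g_0(c,d)-d y_0)/c+O(1)$ together with the identity $\sum_{1\le y_0\le c-1,\,\gcd(y_0,c)=1}y_0=c\varphi(c)/2$, gives
$$
\frac{c}{\varphi(c)}\sum_{\substack{1\le y_0\le c-1\\ \gcd(y_0,c)=1}}X(y_0)=\frac{c d}{2}+O(c+d)\sim\frac{g_0(c,d)}{2}.
$$
The shift by $\ell c d$ is merely a translation of the interval in which primes are being counted: it affects neither Siegel--Walfisz (the main term $c X(y_0)/\varphi(c)$ does not depend on the starting point of the interval) nor the Hardy--Littlewood dissection (where it appears only as a harmless phase factor $e(\ell c d\alpha)$), so the analysis of \cite{DZZ} transfers to every fixed non-negative integer $\ell$.

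The principal obstacle, inherited from \cite{DZZ}, is the large-$c$ regime in which $c$ is close to $\sqrt{c d}$: here Siegel--Walfisz does not apply, and Bombieri--Vinogradov, being an average bound over moduli, offers no control on the single modulus $c$. It is in this regime that the minor-arc analysis of the circle method must be performed with care.
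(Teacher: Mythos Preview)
Your proposal is correct and follows essentially the same route as the paper: split into the regime $c\le(\log g)^{A}$, where the parameterization by $y_0$ together with Siegel--Walfisz gives the main term $g-\tfrac12 cd=g/2+O(g/c)$ exactly as you compute, and the regime of large $c$, where the circle method of \cite{DZZ} is applied with the sole modification that the shift $n\mapsto n+\ell cd$ contributes only the harmless phase $e(-\ell cd\,\alpha)$ to the prime exponential sum. The paper's only additional bookkeeping is to make the threshold explicit ($Q=(\log g)^{14}$, hence $c\ge(\log g)^{43}$ for the circle-method range) and to carry the $O_\ell$ dependence through the major- and minor-arc estimates.
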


\section{Proof of Proposition \ref{prp1}}
 \begin{proof}[Proof of Proposition \ref{prp1}]
Our proof follows from the argument of \cite[Theorem 1.1]{DZZ} with some adjustments. By the same shorthand in \cite{DZZ}, we write $g_0(c,d)=cd-c-d$ briefly as $g$. Throughout the proof, the integer $c$ is supposed to be sufficiently large.

By definitions of $\psi_{\ell,c,d}$, we have
\begin{align*}
\psi_{\ell,c,d}=\sum_{\substack{\ell c d<n\le g_\ell(c,d)\\ n=\ell c d+c x_0+d y_0\\ 0\le x_0\le d;\,0\le y_0\le c}}\Lambda(n)
=\sum_{\substack{n\le g\\ n=c x_0+d y_0\\0\le x_0\le d;\,0\le y_0\le c}}\Lambda(\ell c d+n)+O_\ell(\log g)\,.
\end{align*}
 For any real $\alpha$, let
\begin{align*}
f(\alpha)&=\sum_{0\le n\le g}\Lambda(\ell c d+n)e(\alpha n)\,,\\
h(\alpha)&=\sum_{0\le x\le d\atop 0\le y\le c}e\bigl(\alpha(c x+d y)\bigr)\,,
\end{align*}
where $e(t)$ denotes $e^{2\pi it}$ for any number $t$ as usual.
Then by the orthogonal relation, we have
\begin{equation}
\psi_{\ell,c,d}=\int_0^1 f(\alpha)h(-\alpha)d\alpha\,.
\label{eq:12}
\end{equation}

We are in a position to introduce the Hardy--Littlewood method to evaluate the above integral.

Let $Q<c^{1/3}$ denote a parameter depending only on $c$ and $d$ which shall be decided later. Define the major arcs to be

\begin{align*}
\mathfrak{M}(Q)=\bigcup_{1\le q\le Q}\bigcup_{\substack{1\le a\le q\\ \gcd(a,q)=1}}\left\{\alpha:\left|\alpha-\frac{a}{q}\right|\le \frac{Q}{qg}\right\}.
\end{align*}
By our assumption, we have $Q<g^{1/6}$, from which it follows trivially that the above subsets are pairwise disjoint (see e.g. \cite[Section 2]{DZZ}).
In addition, we note that
$$\mathfrak{M}(Q)\subseteq \left[\frac{1}{Q}-\frac{Q}{qg}, 1+\frac{Q}{qg}\right] \subseteq\left[\frac{Q+1}{g}, 1+\frac{Q+1}{g}\right].$$
The minor arcs are then defined to be
\begin{align*}
\mathfrak{m}(Q)=\left[\frac{Q+1}{g}, 1+\frac{Q+1}{g}\right]\setminus \mathfrak{M}(Q).
\end{align*}
From Eq. (\ref{eq:12}), it is plain that
\begin{align}
\psi_{\ell,c,d} &= \int_{\frac{Q+1}{g}}^{1+\frac{Q+1}{g}}f(\alpha)h(-\alpha)d\alpha\nonumber\\
&=\int_{\mathfrak{M}(Q)}f(\alpha)h(-\alpha)d\alpha+\int_{\mathfrak{m}(Q)}f(\alpha)h(-\alpha)d\alpha.
\label{new1}
\end{align}

\subsection{Estimates of the minor arcs}

Note that
\begin{align*}
\left|f(\alpha)\right|&=\left|\sum_{0\le n\le g}\Lambda(\ell c d+n)e(\alpha n)\right|\\
&=\left|\sum_{\ell cd\le m\le \ell cd+g}\Lambda(m)e(\alpha m)e(-\alpha\ell c d)\right|\quad(m=\ell c d+n)\\
&\le\left|\sum_{\ell cd\le m\le \ell cd+g}\Lambda(m)e(\alpha m)\right|.
\end{align*}
By a remarkable theorem of Vinogradov (see e.g. \cite[Theorem 3.1]{Vaughan}) as well as the Dirichlet approximation theorem (see e.g. \cite[Lemma 2.1]{Vaughan}), we can obtain that 
$$
\sup_{\alpha\in\mathfrak m(Q)}\left|\sum_{m}\Lambda(m)e(\alpha m)\right|\ll_\ell \frac{g(\log g)^4}{Q^{1/2}}+g^{4/5}(\log g)^4\,
$$
(see \cite[Lemma 3.1]{DZZ}), which means that
$$
\sup_{\alpha\in\mathfrak m(Q)}|f(\alpha)|\ll_\ell \frac{g(\log g)^4}{Q^{1/2}}+g^{4/5}(\log g)^4\,.
$$
By using the estimate in \cite[Lemma 3.2]{DZZ}
$$
\int_0^1|h(-\alpha)|d\alpha\ll(\log g)^2\,,
$$
we have
\begin{align}
\int_{\mathfrak m(Q)}f(\alpha)h(-\alpha)&\le\sup_{\alpha\in\mathfrak m(Q)}|f(\alpha)|\int_{\mathfrak m(Q)}|h(-\alpha)|d\alpha\nonumber\\
&\ll_\ell \left(\frac{g(\log g)^4}{Q^{1/2}}+g^{4/5}(\log g)^4\right)\int_0^1|h(-\alpha)|d\alpha\nonumber\\
&\ll_\ell \frac{g(\log g)^6}{Q^{1/2}}+g^{4/5}(\log g)^6\,.
\label{new2}
\end{align}

\subsection{Calculations of the major arc}

We now calculate the integral on the major arcs.
\begin{align}
\int_{\mathfrak M(Q)}f(\alpha)h(-\alpha)
&=\sum_{1\le q\le Q}\sum_{1\le a\le q\atop\gcd(a,q)=1}\int_{\frac{a}{q}-\frac{Q}{q g}}^{\frac{a}{q}+\frac{Q}{q g}}f(\alpha)h(-\alpha)d\alpha\nonumber\\
&=\sum_{1\le q\le Q}\sum_{1\le a\le q\atop\gcd(a,q)=1}\int_{-\frac{Q}{q g}}^{\frac{Q}{q g}}f\left(\theta+\frac{a}{q}\right)h\left(-\theta-\frac{a}{q}\right)d\theta\nonumber\\
&=\int_{-\frac{Q}{g}}^{\frac{Q}{g}}f(\theta)h(-\theta)d\theta
+\mathcal{R}\,,
\label{new3}
\end{align}
where
$$
\mathcal{R}=\sum_{1<q\le Q}\sum_{1\le a\le q\atop\gcd(a,q)=1}\int_{-\frac{Q}{q g}}^{\frac{Q}{q g}}f\left(\theta+\frac{a}{q}\right)h\left(-\theta-\frac{a}{q}\right)d\theta.
$$
We shall later prove that $\mathcal{R}$ still contributes to the `error term'.

For any real $\theta$, we have
\begin{align*}
f(\theta)&=\sum_{0\le n\le g}\Lambda(n+\ell c d)e(\theta n)\\
&=\sum_{\ell c d\le m\le g+\ell c d}\Lambda(m)e\bigl(\theta(m-\ell c d)\bigr)\quad (m=n+\ell c d)\\
&=e(-\theta\ell c d)\widetilde{f(\theta)}\,,
\end{align*}
where
$$
\widetilde{f(\theta)}=\sum_{\ell c d\le m\le g+\ell c d}\Lambda(m)e(\theta m)\,.
$$
Let $\rho(m)=\Lambda(m)-1$. Then
\begin{align*}
\widetilde{f(\theta)}-\sum_{\ell c d\le m\le g+\ell c d}e(\theta m)=\sum_{\ell c d< m\le g+\ell c d}\rho(m) e(\theta m)+O_\ell\left(\log g\right)\,.
\end{align*}
By partial summation, we have
\begin{multline}
\sum_{\ell c d< m\le g+\ell c d}\rho(m) e(\theta m)=e\big((\ell c d+g)\theta\bigr)\sum_{m\le\ell c d +g}\rho(m)\\
-e(\ell c d\theta)\sum_{m\le\ell c d}\rho(m) -2\pi i\theta\int_{\ell c d}^{\ell c d+g}\Bigg(\sum_{m\le t}\rho(m)\Bigg)e(t\theta)d t\,.
\label{new4}
\end{multline}
Using the prime number theorem, there exists some absolute constant $\kappa_1>0$ so that
$$
\sum_{m\le t}\rho(m)=\psi(t)-t\ll t e^{-\kappa_1\sqrt{\log t}}.
$$
Inserting this into Eq. (\ref{new4}), it follows that
\begin{align*}
\sum_{\ell c d<m\le g+\ell c d}\rho(m) e(\theta m)
&\ll_\ell g e^{-\kappa_1\sqrt{\log g}}+|\theta|\int_{\ell c d}^{\ell c d+g}t e^{-\kappa_1\sqrt{\log t}}d t\\
&\ll_\ell g e^{-\kappa_1\sqrt{\log g}}+|\theta|e^{-\kappa_1\sqrt{\log g}}\int_{\ell c d}^{\ell c d+g}t ~d t\\
&\ll_\ell  g (1+|\theta|g) e^{-\kappa_1\sqrt{\log g}}\,.
\end{align*}
Thus,
$$
\widetilde{f(\theta)}=\sum_{\ell c d\le m\le g+\ell c d}e(\theta m)+O_\ell\bigl(g (1+|\theta|g) e^{-\kappa_1\sqrt{\log g}}\bigr)\,.
$$
Hence,
\begin{align*}
f(\theta)&=e(-\theta\ell c d)\sum_{\ell c d\le m\le g+\ell c d}e(\theta m)
+O_\ell\bigl(g (1+|\theta|g) e^{-\kappa_1\sqrt{\log g}}\bigr)\\
&=\sum_{0<n\le g}e(\theta n)+O_\ell\bigl(g (1+|\theta|g) e^{-\kappa_1\sqrt{\log g}}\bigr)\,.
\end{align*}
By the estimates of \cite[Lemma 4.4]{DZZ}, we have
\begin{align}
\int_{|\theta|\le\frac{Q}{g}}f(\theta)h(-\theta)d\theta=\frac{g}{2}+O_\ell\left(\frac{g}{Q}(\log g)^2+g Q^2 e^{-\kappa_1\sqrt{\log g}}\right)\,.
\label{new5}
\end{align}
For $Q<c^{1/3}$, by \cite[Lemma 4.5]{DZZ} we also have
\begin{align}
\mathcal{R}=\sum_{2\le q\le Q}\sum_{1\le a\le q\atop\gcd(a,q)=1}\int_{|\theta|\le\frac{Q}{q g}}f\left(\frac{a}{q}+\theta\right)h\left(-\frac{a}{q}-\theta\right)d\theta
\ll_\ell d Q^3\,.
\label{new6}
\end{align}
Bringing together (\ref{new3}), (\ref{new5}) and (\ref{new6}), we conclude that for $Q<c^{1/3}$
\begin{align}
\int_{\mathfrak M(Q)}f(\alpha)h(-\alpha)=\frac{g}{2}+O_\ell\left(\frac{g}{Q}(\log g)^2+g Q^2 e^{-\kappa_1\sqrt{\log g}}+d Q^3\right).
\label{new7}
\end{align}

\subsection{The asymptotic formula}
It can be concluded from (\ref{new1}), (\ref{new2}) and (\ref{new7}) that for $Q<c^{1/3}$,
\begin{align*}
\psi_{\ell,c,d}=\frac{g}{2}+O_\ell\left(\frac{g}{Q}(\log g)^2+g Q^2 e^{-\kappa_1\sqrt{\log g}}+d Q^3+\frac{g(\log g)^6}{Q^{1/2}}+g^{4/5}(\log g)^6\right)\,.
\end{align*}
We now choose $Q=(\log g)^{14}$. Then we can obtain
\begin{align}
\psi_{\ell,c,d}=\frac{g}{2}+O_\ell\left(\frac{g}{\log g}\right),
\label{new8}
\end{align}
provided that $c\ge(\log g)^{43}$.

For $c\le(\log g)^{43}$, we have
\begin{align*}
\psi_{\ell,c,d}&=\sum_{\substack{\ell c d< n\le g_\ell(c,d)\\ n=\ell c d+c x_0+d y_0\\ 0\le x_0\le d;\,0\le y_0\le c}}\Lambda(n)=\sum_{\substack{0< m\le g\\ m=c x+d y\\x,y\in\mathbb N_0}}\Lambda(\ell c d+m)\\
&=\sum_{1\le y\le c\atop\gcd(y,c)=1}\sum_{m+\ell cd\equiv d y\!\!\!\!\!\pmod c\atop d y\le m+\ell c d\le g}\Lambda(m+\ell c d)+O_\ell (\log g)\\
&=\sum_{1\le y\le c\atop\gcd(y,c)=1}\sum_{n\equiv d y\!\!\!\!\!\pmod c\atop d y+\ell c d\le n\le g+\ell c d}\Lambda(n)+O_\ell (\log g)\\
&=\sum_{1\le y\le c\atop\gcd(y,c)=1}\bigl(\psi(g+\ell c d;c,dy)-\psi(d y+\ell c d;c,dy)\bigr)+O_\ell (\log g).
\end{align*}
Since $c\le (\log g)^{43}\ll (\log d)^{43}$, by the Siegel--Walfisz theorem we have
\begin{align*}
\psi(g+\ell c d;c,dy)-\psi(d y+\ell c d;c,dy)=\frac{g-d y}{\varphi(c)}+O_\ell \left(g e^{-\kappa_2\sqrt{\log g}}\right)\,,
\end{align*}
where $\kappa_2>0$ is an absolute constant.
Thus, for $c\le  (\log g)^{43}$ we have
\begin{align}
\psi_{\ell,c,d}&=\sum_{1\le y\le c\atop\gcd(y,c)=1}\frac{g-d y}{\varphi(c)}+O_\ell\left(g(\log g)^{43}e^{-\kappa_2\sqrt{\log g}}\right)\nonumber\\
&=g-\frac{1}{2}cd+O_\ell\left(g(\log g)^{43}e^{-\kappa_2\sqrt{\log g}}\right)\nonumber
\\
&=g/2+O_\ell\left(g/c+g(\log g)^{43}e^{-\kappa_2\sqrt{\log g}}\right).
\label{new9}
\end{align}
Therefore, from (\ref{new8}) and (\ref{new9}) we know that
$$
\psi_{\ell,c,d}\sim g/2 \quad (\text{as}~ c\rightarrow\infty).
$$

This completes the proof of proposition \ref{prp1}.
\end{proof}

\section{Proof of Theorem \ref{th1}}
\begin{proof}[Proof of Theorem \ref{th1}]
From now on, the symbol $p$ will always denote primes. For $\ell c d<p\le g_\ell(c,d)$, let
$$
\vartheta_{\ell,a,b}(t)=\sum_{\substack{\ell c d<p\le t\\ p=\ell c d+c x_0+d y_0\\ 0\le x_0\le d;\,0\le y_0\le c}}\log p
$$
and
$\vartheta_{\ell,a,b}=\vartheta_{\ell,a,b}\bigl(g_\ell(c,d)\bigr)$.
From Lemma \ref{lem5}, we obtain that
\begin{align}\label{E1}
	\pi_{\ell,a,b}=\sum_{\substack{\ell c d<p\le g_\ell(c,d)\\ p=\ell c d+c x_0+d y_0\\ 0\le x_0\le d;\,0\le y_0\le c}}1=\frac{\vartheta_{\ell,a,b}}{\log g_\ell(c,d)}+\int_{\ell cd}^{g_\ell(c,d)}\frac{\vartheta_{\ell,a,b}(t)}{t\log ^2t}dt
\end{align}
via partial summation.
By the Chebyshev estimate, we have
 $$\vartheta_{\ell,a,b}(t)\leqslant\sum_{p\leqslant t}\log p\ll t,$$
from which it follows that
\begin{equation}\label{E2}
	\int_{\ell cd}^{g_\ell(c,d)}\frac{\vartheta_{\ell,a,b}(t)}{t\log ^2t}dt\ll
	\int_{\ell cd}^{g_\ell(c,d)}\frac{1}{\log^2 t}dt\ll_\ell\frac{g}{(\log g)^2}.
\end{equation}
Again, using the Chebyshev estimate, we have
\begin{equation}\label{E3}
	\vartheta_{\ell,a,b}=\psi_{\ell,a,b}+O_\ell\bigl(\sqrt{g}\bigr).
\end{equation}
Thus, by Proposition \ref{prp1} and Eqs. (\ref{E1}), (\ref{E2}), (\ref{E3}), we conclude that
\begin{equation}\label{E4}
	\pi_{\ell,a,b}=\frac{\psi_{\ell,a,b}}{\log g_\ell(c,d)}+O_\ell\left(\frac{\sqrt{g}}{\log g}+\frac{g}{(\log g)^2}\right)\sim \frac12 \frac{g}{\log g_\ell(c,d)},
\end{equation}
as $c\rightarrow\infty$. Recall that
\begin{equation}\label{E5}
\pi(g_\ell (c,d))\sim \frac{g_\ell(c,d)}{\log g_\ell(c,d)}\sim \frac{(\ell+1)g}{\log g_\ell(c,d)} \quad (\text{as~}c\rightarrow\infty).
\end{equation}

Now, Theorem \ref{th1} follows immediately from (\ref{E4}) and (\ref{E5}).
\end{proof}


\section*{Acknowledgments}

The first named author is supported by National Natural Science Foundation of China  (Grant No. 12201544), Natural Science Foundation of Jiangsu Province, China (Grant No. BK20210784), China Postdoctoral Science Foundation (Grant No. 2022M710121). He is also supported by  (Grant No. JSSCBS20211023 and YZLYJF2020PHD051).

\end{document}